\tikzstyle{small node} = [draw, circle, fill = black, minimum size = 3pt, inner sep = 0pt]
\tikzstyle{black node} = [draw, circle, fill = black, minimum size = 5pt, inner sep = 0pt]
\tikzstyle{white node} = [draw, circle, fill = white, minimum size = 5pt, inner sep = 0pt]
\tikzstyle{normal} = [draw=none, fill = none, rectangle, minimum size =0]
\def\empty{}
		\def\arg{#2}%
		\def\arg{#1}%
		\def\argo{#2}%
\tikzset{
	nodes/.style n args={4}{
		draw ,circle,outer sep=0.5mm,
		/utils/set if not empty={/tikz/fill}{#1}{black},
		/utils/set if not empty={/tikz/minimum size}{#4}{5}
	}
}
\newtheorem{environment}{Environment}[section]
\newtheorem{lemma}[environment]{Lemma}
\crefname{lemma}{lemma}{lemmas}
\crefname{question}{question}{questions}
\crefname{corollary}{corollary}{corollaries}
\newtheorem{theorem}{Theorem}
\crefname{theorem}{theorem}{theorems}
\crefname{proposition}{proposition}{Propositions}
\newtheorem{Conjecture}{Conjecture}
\crefname{Conjecture}{Conjecture}{Conjectures}
\crefname{example}{example}{examples}
\crefname{remark}{remark}{remarks}
\crefname{definition}{definition}{definitions}
\crefname{figure}{figure}{figures}
\crefname{chapter}{chapter}{chapters}
\crefname{section}{section}{sections}
\crefname{algorithm}{algorithm}{algorithms}
\crefname{notation}{notation}{notations}
\newtheorem{claim}{Claim}
\crefname{claim}{claim}{claims}
\crefname{enumi}{condition}{conditions}
\crefname{definitionx}{definition}{definitions}
\def\\cp{\mathcal{C'}} 
\renewcommand{\geq}{\geqslant}
\renewcommand{\leq}{\leqslant}
\renewcommand{\ge}{\geqslant}
\renewcommand{\le}{\leqslant}
\def\cqedsymbol{\ifmmode$\lrcorner$\else{\unskip\nobreak\hfil
\penalty50\hskip1em\null\nobreak\hfil$\lrcorner$
\parfillskip=0pt\finalhyphendemerits=0\endgraf}\fi}
\def\lqedsymbol{\ifmmode$\lrcorner$\else{\unskip\nobreak\hfil
\penalty50\hskip1em\null\nobreak\hfil$\rule{1ex}{1ex}$
\parfillskip=0pt\finalhyphendemerits=0\endgraf}\fi} 
\newcommand{\cqed}{\renewcommand{\qed}{\cqedsymbol}}
\DeclareMathOperator{\fvs}{fvs}
\DeclareMathOperator{\cp}{cp}
\DeclareMathOperator{\fp}{fp}
\title{Jones' Conjecture in subcubic graphs%
\thanks{This research is a part of a project that have received funding from the European Research Council (ERC) under the European Union's Horizon 2020 research and innovation programme Grant Agreement 714704.}}
\author[1]{Marthe Bonamy}
\author[2]{Fran\c{c}ois Dross}
\author[2]{Tom\'a\v{s} Masa\v{r}\'ik}
\author[2]{Wojciech Nadara}
\author[2]{Marcin Pilipczuk}
\author[2]{Micha\l \ Pilipczuk}
\affil[1]{CNRS, LaBRI, Université de Bordeaux, France.}
\affil[2]{MIMUW, Warsaw, Poland.}
\date{\today}
\begin{document}

\maketitle

\begin{abstract}
We confirm Jones' Conjecture for subcubic graphs. Namely, if a subcubic planar graph does not contain $k+1$ vertex-disjoint cycles, then it suffices to delete $2k$ vertices to obtain a forest.
\end{abstract}

\begin{textblock}{20}(0, 13.4)
\includegraphics[width=40px]{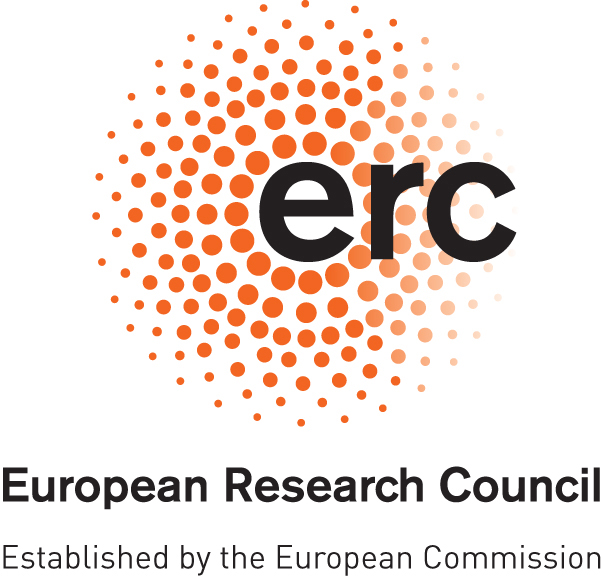}%
\end{textblock}
\begin{textblock}{20}(0, 14.3)
\includegraphics[width=40px]{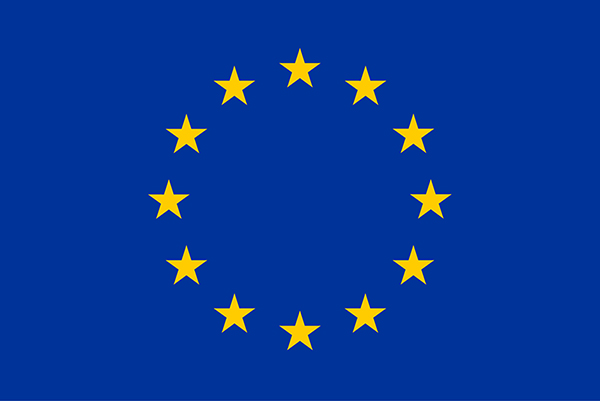}%
\end{textblock}


We investigate the connection between the maximum number of vertex-disjoint cycles in a graph and the minimum number of vertices whose deletion results in a cycle-free graph, i.e. a forest. A \emph{cycle packing} of a (multi)graph $G$ is a set of vertex disjoint cycles that appear in $G$ as subgraphs. We denote the maximum size of a cycle packing of $G$ by $\cp(G)$. A \emph{feedback vertex set} of a (multi)graph $G$ is a set $S$ of vertices such that $G - S$ is a forest. We denote an arbitrary minimum feedback vertex set of $G$ as $FVS(G)$ and denote its size by $\fvs(G)$.

Erd\H{o}s and Pósa~\cite{erdos_posa_1965} showed that there is a constant $c$ such that for any graph $G$, $\fvs(G) \le c \cdot\cp(G) \log \cp(G)$, and that this upper-bound is tight for some graphs. This seminal result led to rich developments. Two main directions: determining which structures can replace ``cycles'' there (free to increase the bounding function), and improving the bounding functions. Most notably, in the first line of research, Cames van Banteburg et al.~\cite{van2019tight} proved that ``cycle'' can directly be replaced with ``minor of a given planar graph'', which was earlier showed by Robertson and Seymour~\cite{robertson1986graph} with a worse bounding function.

We focus on the second line of research, and are interested in the best possible bound in the original theorem, when restricted to the specific case of planar graphs. A drastic improvement is then possible, and we were motivated by the following elusive conjecture.

\begin{Conjecture}[``Jones' Conjecture'', Kloks, Lee and Liu~\cite{kloks2002new}]\label{conj:jones}
Every planar graph $G$ satisfies $\fvs(G) \leq 2 \cdot \cp(G)$.\footnote{\url{http://www.openproblemgarden.org/op/jones_conjecture}}
\end{Conjecture}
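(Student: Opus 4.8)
The plan is to induct on $\cp(G)$. When $\cp(G)=0$ the graph $G$ is a forest and $\fvs(G)=0$, so the base case is immediate. For the inductive step it suffices to establish the following reduction lemma: \emph{every planar (multi)graph $G$ with $\cp(G)\geq 1$ admits a set $D$ of at most two vertices such that $\cp(G-D)\leq\cp(G)-1$.} Granting this, $G-D$ is again planar with strictly smaller cycle-packing number, so the inductive hypothesis yields $\fvs(G-D)\leq 2\cp(G-D)\leq 2(\cp(G)-1)$; since the union of $D$ with a minimum feedback vertex set of $G-D$ is a feedback vertex set of $G$, we conclude $\fvs(G)\leq |D|+\fvs(G-D)\leq 2\cp(G)$, as desired. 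The factor $2$ is forced precisely because we allow two vertices per unit drop in $\cp$: the disjoint union of copies of $K_4$ satisfies $\fvs=2\cp$, and since $\fvs(K_4)=2$ one cannot reduce its packing with a single vertex, so the lemma cannot be strengthened to $|D|\le 1$.

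Before attacking the lemma I would pass to a well-structured instance. Working in the multigraph setting of the paper, deleting vertices of degree at most one and suppressing (smoothing) vertices of degree two changes neither $\fvs$ nor $\cp$ — suppression may create parallel edges, but a pair of parallel edges is itself a short cycle — so I may assume $G$ has minimum degree at least three. Moreover $\cp$ is additive over the blocks (biconnected components): every cycle lives inside a single block, and a two-vertex transversal chosen inside one block $B$ with $\cp(B)\ge 1$ can only decrease the packing number of the other blocks, so $\cp(G-D)\le(\cp(B)-1)+\sum_{B'\neq B}\cp(B')=\cp(G)-1$. Hence it suffices to prove the lemma for $2$-connected $G$, where, by Euler's formula, minimum degree three forces an abundance of short faces. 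Reformulating the target, observe that $\cp(G-D)<\cp(G)$ holds if and only if $D$ meets every family of $\cp(G)$ vertex-disjoint cycles; the lemma therefore asks for a two-vertex transversal of all maximum cycle packings, to be located inside a bounded region of a fixed planar embedding that every maximum packing is forced to use.

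The heart of the argument is to produce such a transversal, and the natural tool is a discharging / face-counting analysis on the embedding: assign charge to vertices and faces, redistribute it so that some bounded region accumulates enough charge to certify that it is ``$K_4$-dense'' — meaning it contains two vertices lying on every maximum packing — and take those as $D$. In the subcubic regime this is tractable because each vertex lies on few packing cycles at once, which is exactly the setting handled earlier; the new difficulty in general is that a single high-degree vertex can simultaneously carry many otherwise-disjoint cycles, so deleting it may drop the packing by one while a competing maximum packing dodges it entirely, defeating the transversal requirement. Controlling this interaction between high degree and the \emph{exact} constant $2$ — equivalently, ruling out planar configurations in which every pair of vertices is avoided by some maximum packing — is the main obstacle, and is precisely where the general conjecture outruns the subcubic theorem. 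Reassuringly, the obstruction is self-referential: a configuration defeating the lemma would have to contain a planar piece that is robust to deleting any two of its vertices yet carries only one unit of packing, which is itself a localized failure of the very inequality $\fvs\le 2\cp$ we are proving; this explains both why the lemma is plausible and why it is hard. To pin the constant at $2$ rather than at some $O(1)$, I would expect to need either a charge scheme weighted by how many packing cycles a vertex can carry, or an LP-duality certificate exploiting $\cp(G)\le\mathrm{fCP}(G)=\mathrm{fFVS}(G)\le\fvs(G)$ together with the near half-integrality of cycle packing in planar graphs.
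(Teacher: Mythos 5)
First, a point of scope: the statement you were asked about is Conjecture~\ref{conj:jones} itself, which remains open; the paper does not prove it, but only its subcubic case (Theorem~\ref{th:jonessubcubic}). So your proposal should be judged as an attempt at an open problem, and indeed it is not a proof. Your outer scaffolding is fine and standard: the induction on $\cp(G)$ is valid, the preprocessing (deleting degree-$\le 1$ vertices, suppressing degree-$2$ vertices, reducing to a $2$-connected block using additivity of $\cp$ over blocks) is correct and mirrors Claims~\ref{cl:3reg} and~\ref{2co} of the paper, and the reduction of the conjecture to your lemma --- \emph{every planar graph with $\cp(G)\ge 1$ has two vertices meeting every maximum cycle packing} --- is sound. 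But that lemma is the entire content of the conjecture, and you never prove it. The ``heart of the argument'' paragraph is a research program, not an argument: no charge assignment is given, no discharging rules are stated, no local configuration is exhibited, and the phrase ``$K_4$-dense region'' is not defined in a way that could be verified. You yourself identify why this is circular: a configuration defeating the lemma is essentially a local failure of $\fvs \le 2\cp$, so the lemma is at least as hard as the conjecture. The known results ($\fvs \le 3\cp$ of Chappell et al., Chen et al., and Ma et al.) show that this inductive template works with $|D| \le 3$; compressing it to $|D| \le 2$ is precisely the open gap, and nothing in your sketch closes it.

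It is also worth noting that your route differs from how the paper handles even the subcubic case, where your approach is likewise not carried out. The paper does not induct on $\cp$ or construct packing transversals; it takes a minimum counterexample $G$, proves it must be $3$-regular, $2$-connected, $3$-connected, simple, and essentially $4$-edge-connected (Lemma~\ref{lem:3connected}, via careful $\fvs$/$\cp$ accounting across $2$- and $3$-edge-cuts, equations~(\ref{eq:fvsABC})--(\ref{eq:cpG})), and then invokes Munaro's Theorem~\ref{munaro} that no cyclically $4$-edge-connected simple subcubic planar counterexample exists. That black-box theorem is what replaces your unproven discharging step, and it is specific to subcubic graphs --- which is exactly the second obstacle the paper's conclusion flags for any generalization. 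So: correct reduction, genuinely missing core; to make progress you would need to either prove your two-vertex transversal lemma for some restricted class or find an analogue of Theorem~\ref{munaro} beyond the subcubic setting.
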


Note that Conjecture~\ref{conj:jones} is tight for wheels or for the dodecahedron.
Currently, the best known bound is that every planar graph $G$ satisfies $\fvs(G) \leq 3 \cdot \cp(G)$, as proved independently by Chappel et al.~\cite{chappell2014cycle}, Chen et al.~\cite{chen2012feedback}, and Ma et al.~\cite{ma2013approximate}.

In his PhD Thesis, Munaro~\cite{munaro2016quelques} considered the case of subcubic graphs and made significant progress. Here we complete the case, and prove that Jones' Conjecture holds for subcubic graphs.

\begin{theorem}\label{th:jonessubcubic}
Every subcubic planar multigraph $G$ satisfies $\fvs(G) \leq 2 \cdot \cp(G)$.
\end{theorem}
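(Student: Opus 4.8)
The plan is to argue by induction, reducing at each step to a strictly smaller graph while preserving the ratio $\fvs\le 2\cp$. First I would clean up the instance: delete vertices of degree at most $1$ (they lie on no cycle and never belong to a minimal feedback vertex set) and suppress vertices of degree $2$ (replacing a path $a-w-b$ by an edge $ab$, possibly creating a loop or a parallel edge); both operations leave $\cp(G)$ and $\fvs(G)$ unchanged. Since cycles live inside blocks and both parameters are additive over the blocks of $G$, I may assume $G$ is $2$-connected, so it is a $2$-connected cubic planar multigraph in which every face is bounded by a cycle. The engine of the induction is the observation that, because $\fvs(G)\le \fvs(G-X)+|X|$ for every vertex set $X$, it suffices to exhibit $X$ with $|X|\le 2$ and $\cp(G-X)\le \cp(G)-1$ — equivalently, a set of at most two vertices meeting every maximum cycle packing — since then $\fvs(G)\le 2\cp(G-X)+|X|\le 2(\cp(G)-1)+2=2\cp(G)$. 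With $|X|=1$ one even gains slack.

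\textbf{Short cycles: the isolation trick.} The heart of the easy cases is that in a cubic graph a maximum packing cannot avoid a short cycle for free. If $G$ has a loop at $v$, every maximum packing must use $v$ (otherwise adjoin the loop), so $X=\{v\}$ works. If $G$ has two parallel edges between $u$ and $v$, every maximum packing meets $\{u,v\}$, so $X=\{u,v\}$ works. Assume now $G$ is simple. If $G$ contains a triangle $abc$, set $X=\{a,b\}$: after deleting $a$ and $b$ the vertex $c$ has degree $1$, hence lies on no cycle, so any packing avoiding $X$ avoids $\{a,b,c\}$ entirely and could be enlarged by the triangle, forcing $\cp(G-X)<\cp(G)$. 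If $G$ has girth $4$, take a $4$-cycle $abcd$ (necessarily chordless, as a chord would give a triangle) and set $X=\{a,c\}$: now both $b$ and $d$ drop to degree $1$ and the same argument applies. This settles every case of girth at most $4$.

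\textbf{The crux: girth $5$.} By Euler's formula a cubic planar graph has a face of length at most $5$, so once the girth is $5$ there is a chordless pentagonal face $v_1v_2v_3v_4v_5$, whose five external neighbours $u_1,\dots,u_5$ are forced to be pairwise distinct (any coincidence produces a cycle of length at most $4$). Here the isolation trick breaks down: deleting any two vertices of the pentagon still leaves two consecutive pentagon vertices carrying their external stubs, through which a packing cycle can run, so no two vertices of a single pentagon need meet all maximum packings — and indeed the dodecahedron, with $\fvs=6=2\cdot 3=2\cp$, shows the bound is tight precisely in the all-pentagon regime. I expect this girth-$5$ analysis to be the main obstacle. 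The plan here is a discharging argument: distribute charge so that a girth-$5$ cubic planar graph must contain one of a short list of reducible local configurations (for instance two pentagons sharing an edge, or a pentagon whose external neighbours close up into a short cycle after suppressing it), and show that each configuration admits a bounded deletion/contraction $X$ realising $\cp(G-X)\le \cp(G)-\lceil|X|/2\rceil$, which is exactly what the induction consumes.

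\textbf{Main difficulties.} Two points will require the most care. First, the discharging must be designed so that the configuration list is genuinely unavoidable while every listed configuration is reducible; keeping each reduction inside the class of subcubic planar graphs and re-establishing $2$-connectivity afterwards is delicate, and it may be necessary to strengthen the induction hypothesis — for example by allowing a bounded set of marked terminal vertices that the packing and the feedback set must respect — so that local surgeries around a pentagon compose cleanly. Second, the extremal configurations, above all the dodecahedron, saturate the inequality and so must either be dispatched as explicit base cases or be shown never to obstruct the inductive step, since losing even a single unit in the bookkeeping would break the factor $2$.
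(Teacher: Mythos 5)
Your reductions for girth at most $4$ are correct: the deletion/suppression cleanup, the block decomposition, the inductive engine $\fvs(G)\le \fvs(G-X)+|X|$ with $\cp(G-X)\le\cp(G)-1$, and the isolation trick for loops, parallel edges, triangles and $4$-cycles all check out. But the proposal has a genuine gap exactly where you flag it: the girth-$5$ case is not proved, only planned. A discharging scheme is named, two candidate configurations are suggested ("two pentagons sharing an edge", "a pentagon whose external neighbours close up into a short cycle"), but neither the unavoidability of the list nor the reducibility of any configuration in it is established. This is not a technical loose end — for cubic planar graphs of girth $5$ this case \emph{is} the theorem, and the tightness of the dodecahedron (which you correctly note) means every pentagon reduction must be exactly lossless, which is precisely why no simple local surgery is known to work. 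As it stands, the argument proves Jones' conjecture only for subcubic planar multigraphs containing a cycle of length at most $4$ after suppression, plus an honest statement that the remaining case is hard.

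For comparison, the paper does not attack the girth-$5$ regime locally at all. It shows that a minimum counter-example must be a simple, essentially $4$-edge-connected cubic planar graph — the nontrivial step being the analysis of a nontrivial $3$-edge-cut $\{e_A,e_B,e_C\}$, where a delicate case analysis of the quantities $\fvs(G_i^{ABC-x})$ and $\cp(G_i^{ABC-x})$ for the two sides of the cut yields a contradiction — and then invokes Munaro's Theorem 3.4.10, which states that a simple minimum counter-example cannot be cyclically $4$-edge-connected (equivalent to essentially $4$-edge-connected in the cubic case). In other words, the heavy lifting that your discharging plan would have to supply is exactly the content of that cited theorem; your route would essentially require reproving it from scratch. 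If you want to salvage your approach, the realistic options are either to carry out the full discharging argument (expect substantial case analysis, with the dodecahedron forcing zero slack everywhere), or to restructure the induction around edge cuts as the paper does and lean on Munaro's result for the cyclically $4$-edge-connected core.
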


\section{Proof of Theorem~\ref{th:jonessubcubic}}

\subsection{Notation}
A multigraph is \emph{simple} if it has no loops or multi-edges. In this case, we simply refer to it as a graph.

Let $G = (V,E)$ be a (multi)graph. For $W \subseteq V$, we denote by $G[W]$ the subgraph of $G$ induced by $W$, and by $G - W$ the subgraph of $G$ induced by $V \setminus W$. If $W = \{v\}$, then we denote $G - v = G - W$. For 
$F \subseteq E$, we denote $G - F = (V, E \setminus F)$. If $F = \{e\}$, then we denote $G - e = G - F$. For $W \cap V = \emptyset$, $G + W$ is the disjoint union of $G$ and a set $W$ of isolated vertices. If $W = \{v\}$, then we denote $G + v = G + W$. For $F$ a set of pairs of edges with $F \cap E = \emptyset$, we denote $G + F = (V, E \cup F)$. If $F = \{e\}$, then we denote $G + e = G + F$. Graph is called \emph{cubic} if all of its degrees are exactly $3$. Graph is called \emph{subcubic} if its degrees are not bigger than $3$.

A (multi)graph is \emph{$k$-connected} if 
the removal of any $k-1$ vertices leaves the graph connected. A (multi)graph is \emph{$k$-edge-connected} if the removal of at most $k-1$ edges leaves the graph connected. Note that a subcubic (multi)graph with at least $k+1$ vertices is $k$-connected if and only if it is $k$-edge-connected.

A (multi)graph is \emph{essentially $4$-edge-connected} if the removal of at most three edges does not yield two components with at least two vertices each. A (multi)graph is \emph{cyclically $4$-edge-connected} if the removal of at most three edges does not yield two components that both contain a cycle. For a cubic (multi)graph, these last two notions are equivalent.

\subsection{Proof}

We proceed by contradiction. Let $G$ be a counter-example to Theorem~\ref{th:jonessubcubic} with the fewest vertices. We use the following very convenient theorem from Munaro~\cite{munaro2016quelques}:

\begin{theorem}[Theorem 3.4.10 in~\cite{munaro2016quelques}]\label{munaro}
If $G$ is simple, 
then 
it is not cyclically 4-edge-connected.
\end{theorem}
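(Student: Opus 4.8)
The plan is to assume for contradiction that $G$ is simple and cyclically $4$-edge-connected, and to prove directly that any such graph satisfies $\fvs(G) \le 2\cp(G)$, contradicting that $G$ is a counterexample. First I would normalise to the cubic case using minimality of $G$. A vertex of degree at most $1$ lies on no cycle and belongs to no minimum feedback vertex set, so deleting it changes neither $\cp$ nor $\fvs$ and produces a strictly smaller counterexample. A vertex of degree exactly $2$ can be suppressed, replacing its two incident edges by a single edge between its neighbours; since suppression is the inverse of subdividing an edge, it preserves both $\cp$ and $\fvs$, again giving a smaller counterexample. Hence $G$ is cubic, and it remains to establish Jones' bound for every simple, cubic, planar, cyclically $4$-edge-connected graph.

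Next I would extract structural information. Writing $n = |V|$, a cubic planar graph has $m = \tfrac32 n$ edges and $f = \tfrac{n}{2}+2$ faces, so with $\ell(F)$ the length of a face $F$ one has $\sum_F (6 - \ell(F)) = 6f - 2m = 12$; short faces must therefore be plentiful. Cyclic $4$-edge-connectivity controls them sharply: the edges leaving a face of length $\ell$ form an $\ell$-edge-cut, so a triangular face coexisting with any cycle in the rest of $G$ would be a cyclic $3$-edge-cut, forcing $G$ (once it is large) to have girth at least $4$ and forbidding the small separators that would otherwise let short faces cluster. This is exactly the regime in which one expects a rich supply of short, pairwise vertex-disjoint cycles.

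To turn this into a packing I would use that a simple cubic cyclically $4$-edge-connected graph is bridgeless, and hence by Petersen's theorem has a perfect matching $M$; then $G - M$ is $2$-regular, i.e.\ a disjoint union of cycles $C_1,\dots,C_c$, which is already a packing witnessing $\cp(G) \ge c$. Choosing $M$ so that this $2$-factor splits into as many short cycles as the face structure permits maximises $c$, while the complementary task is to produce a feedback vertex set of size at most $2c$, built by deleting a bounded number of vertices per cycle $C_i$ together with suitable matching edges so that what remains is acyclic. Combining $\fvs(G) \le 2c \le 2\cp(G)$ with the trivial inequality $\cp(G) \le \fvs(G)$ (distinct packed cycles require distinct feedback vertices) would then close the contradiction.

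The main obstacle is sharpness. The inequality $\fvs \le 2\cp$ is attained with equality by $K_4$ and by the dodecahedron, so the argument has essentially no slack: the $2$-factor must be chosen optimally, the feedback vertex set must be constructed with matching precision, and the two must be coordinated through a discharging scheme calibrated against exactly these extremal configurations (triangular faces for $K_4$, pentagonal faces for the dodecahedron). Forcing the constant to be precisely $2$, rather than merely some universal constant as in the known $\fvs \le 3\cp$ bound, is the delicate combinatorial heart of the statement, and is where I expect the real work to lie.
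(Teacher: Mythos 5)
You should first be aware of the ground truth here: the paper does \emph{not} prove this statement. It is Munaro's Theorem 3.4.10, imported verbatim as a black box (hence the bracketed attribution), and the whole point of the paper is that everything \emph{else} about a minimum counterexample can be handled by elementary reductions, leaving the cyclically $4$-edge-connected case to Munaro. So what you needed to supply is, in effect, a self-contained proof of Munaro's result: every simple, cyclically $4$-edge-connected, planar subcubic graph satisfies $\fvs \le 2\cp$. Your opening normalization to the cubic case is correct (it mirrors Claim~\ref{cl:3reg} of the paper), and the Euler-formula count $\sum_F(6-\ell(F))=12$, the girth-$\ge 4$ observation, and the Petersen/2-factor setup are all sound --- but they are the routine part.

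The genuine gap is that the entire theorem is hidden inside the sentence ``the complementary task is to produce a feedback vertex set of size at most $2c$,'' for which you give no argument and which you yourself defer as ``where I expect the real work to lie.'' Worse, the 2-factor framing makes the task strictly \emph{harder}, not easier: the cycles of a 2-factor form one particular packing, so $c \le \cp(G)$, and your chain $\fvs(G)\le 2c\le 2\cp(G)$ commits you to proving $\fvs(G)\le 2c$, a statement a priori stronger than Jones' bound for this class. Nothing in your outline forces a 2-factor with many cycles to exist: if the chosen (or even the best) 2-factor is a Hamiltonian cycle or has very few cycles, then ``a bounded number of deleted vertices per cycle'' is hopeless, since $\fvs$ can be far larger than a constant; neither girth $\ge 4$ nor the count $\sum_F(6-\ell(F))=12$ rules this out, and you would first have to prove that some 2-factor has at least $\fvs(G)/2$ cycles, which is again essentially the theorem. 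The sharpness you correctly identify ($K_4$, the cube, the dodecahedron) is exactly why an appeal to ``a discharging scheme calibrated against these configurations'' cannot stand in for a proof: a successful argument needs a matched pair of tight bounds --- for instance, a packing of pairwise vertex-disjoint faces (in a cubic planar graph these correspond to an independent set in the dual triangulation, giving $\cp(G)\ge (n+4)/8$, tight for the dodecahedron) played against an equally tight upper bound on $\fvs$ via large induced forests --- and your proposal constructs neither side.
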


To obtain a contradiction, we argue that $G$ is in fact, a simple graph that is essentially $4$-edge-connected, as follows.

\begin{lemma}\label{lem:3connected}



The multigraph $G$ is an essentially $4$-edge-connected simple graph.

\end{lemma}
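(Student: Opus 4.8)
The plan is the standard minimal-counterexample strategy: assuming $G$ fails one of the two asserted properties, I would exhibit a subcubic planar multigraph with strictly fewer vertices that is still a counterexample to \cref{th:jonessubcubic}, contradicting the choice of $G$. The basic engine is that both $\cp$ and $\fvs$ are additive over connected components, are preserved by suppressing degree-two vertices, and are unaffected by deleting vertices of degree at most one (such vertices lie on no cycle). In particular a disconnected counterexample has a connected component that is already a smaller counterexample, so I may assume $G$ is connected; deleting isolated or pendant vertices and suppressing degree-two vertices (invoking the parallel-edge reduction below whenever a suppression would create a loop or a digon) then forces minimum degree $3$, so that $G$, being subcubic, is in fact cubic -- a fact I will also need later to identify essential and cyclic $4$-edge-connectivity.

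Next I would remove the remaining non-simplicity. A loop at $v$ is a one-vertex cycle, so $\cp(G)\ge\cp(G-v)+1$ while $\fvs(G)\le\fvs(G-v)+1$; feeding the minimality bound $\fvs(G-v)\le 2\cp(G-v)$ into these yields $\fvs(G)\le 2\cp(G)-1$, a contradiction, so $G$ is loopless. For a pair of parallel edges between $u$ and $v$, cubicity forces each of $u,v$ to have exactly one further incident edge; deleting both vertices gives $R=G-\{u,v\}$, and a short case check shows $\cp(G)=\cp(R)+1$ and $\fvs(G)=\fvs(R)+1$ (the digon contributes exactly one cycle to an optimal packing and one vertex to an optimal feedback set, and no cycle crossing through $\{u,v\}$ can do better). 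Minimality on $R$ then again gives $\fvs(G)\le 2\cp(G)-1$. A triple edge would make $G$ a two-vertex multigraph, which satisfies the theorem outright. Hence $G$ is simple.

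It remains to rule out nontrivial small edge-cuts. Since vertices of degree at most two are gone, every bridge and every $2$-edge-cut has both sides of size at least two. A bridge lies on no cycle, so $\cp$ and $\fvs$ are additive across it and one side is a smaller counterexample; thus $G$ is $2$-edge-connected. For a $2$-edge-cut, and then for a $3$-edge-cut, with sides $A$ and $B$ both of size at least two, the key structural observation is that at most one cycle of any packing crosses the cut, since a crossing cycle must use an even number of the cut edges and hence exactly two. I would replace each side by a bounded gadget recording its interaction with the cut -- for a $2$-edge-cut, contracting the opposite side to a single edge joining the two attachment points; for a $3$-edge-cut, contracting the opposite side to a single new degree-three vertex joined to the three attachment points -- obtaining strictly smaller cubic planar multigraphs $G_A$ and $G_B$ to which minimality applies.

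The main obstacle will be the exact bookkeeping in these last two reductions. Because the inequality $\fvs\le 2\cp$ is tight (for instance on wheels and on the dodecahedron), the cut reductions have to be essentially loss-free: I must show that optimal packings and optimal feedback vertex sets can be chosen to respect the cut up to a controlled additive constant, so that $\fvs(G_A)\le 2\cp(G_A)$ and $\fvs(G_B)\le 2\cp(G_B)$ recombine to exactly $\fvs(G)\le 2\cp(G)$ rather than to a bound weaker by an additive term. Making the constants match -- in particular handling the cases where every optimal packing of a reduced graph is forced to route through the gadget, and choosing which attachment vertex to place in the feedback set -- is the delicate heart of the argument; the smallest instance, where a $3$-edge-cut peels off a triangle that one contracts to a single vertex, already displays the phenomenon and is a useful warm-up. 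Once all nontrivial cuts of size at most three are excluded, $G$ is precisely an essentially $4$-edge-connected simple graph, as required.
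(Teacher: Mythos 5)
Your clean-up reductions are sound: the degree arguments match Claims~\ref{cl:3reg} and~\ref{2co} of the paper, and your direct loop/digon reductions are a perfectly good (indeed slightly more self-contained) substitute for the paper's remark that cubicity plus Claim~\ref{2co} forces simplicity. Your two-edge-cut gadget is also the paper's $G'_i=G_i+\{u_iv_i\}$ from Claim~\ref{3co}, though even here you stop short of the actual argument; the paper's route is to show $\cp(G'_i)=\cp(G_i)+1$ for both sides (otherwise $\fvs(G)\le \fvs(G'_1)+\fvs(G_2)\le 2\cp(G)$ already contradicts the choice of $G$), deduce that every maximum cycle packing of $G'_i$ uses the added edge, splice two such packings across the cut to get $\cp(G)=\cp(G_1)+\cp(G_2)+1$, and conclude $\fvs(G)\le \fvs(G_1)+\fvs(G_2)+1<2\cp(G)$.

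The genuine gap is the nontrivial $3$-edge-cut case, which is the heart of the lemma and the bulk of the paper's proof, and which you explicitly defer as ``the exact bookkeeping''. The plan you do describe --- contract each side to a degree-three vertex, obtaining $G_1^{ABC}$ and $G_2^{ABC}$, and apply minimality --- cannot by itself yield a contradiction: the best it gives is $\fvs(G)\le \fvs(G_1^{ABC})+\fvs(G_2)\le 2(\cp(G_1)+1)+2\cp(G_2)\le 2\cp(G)+2$, and all of these constraints are simultaneously satisfiable with $\fvs(G)=2\cp(G)+1$, so no contradiction can follow; as you observe yourself, the inequality is tight, so any additive loss is fatal. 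Moreover, even the inequality $\fvs(G)\le \fvs(G_1^{ABC})+\fvs(G_2)$ needs a justification you do not give (the paper uses the fact that the three pairwise paths among three vertices of a tree share a common vertex, so one vertex of the leftover forest of $G_2$ breaks all connections it provides between the attachment points). The paper then needs genuinely more machinery: the three further gadgets $G_i^{AB}$, $G_i^{AC}$, $G_i^{BC}$ obtained by joining, inside $G_i$, the endpoints of two of the three cut edges; the four equalities (\ref{eq:fvsABC})--(\ref{eq:cpG}) forced by minimality; and the closing case analysis (\ref{notthree0})--(\ref{notwocp1}), which tracks for each pair-gadget whether its cycle packing number increments and reaches a contradiction only after a careful assignment of roles to $e_A,e_B,e_C$. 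None of this, nor any substitute for it, appears in your proposal: you have named the obstacle precisely but not overcome it, so the statement is not proved.
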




\begin{proof}

While Claims~\ref{cl:3reg},~\ref{2co},~\ref{3co} are known and easy properties of a minimum counter-example to Jones' conjecture on subcubic graphs (see e.g.~\cite{munaro2016quelques}), we include their proofs because we believe they may constitute a useful warm-up. The uninterested reader may skip them guilt-free.

\begin{claim}\label{cl:3reg}
The multigraph $G$ is $3$-regular.
\end{claim}
\begin{proof}

Suppose $G$ has a vertex $v$ with degree at most $1$. Then $G - v$ satisfies Jones' Conjecture by minimality of $G$. As no cycle of $G$ contains $v$, we have $\fvs(G - v) = \fvs(G)$ and $\cp(G - v) = \cp(G)$, therefore $G$ also satisfies Jones' Conjecture, a contradiction. 

Suppose $G$ has a vertex $v$ with degree $2$, and let $u$ and $w$ be the two neighbors of $v$. Then $G' = G - v + uw$ satisfies Jones' Conjecture, so $\fvs(G') \leq 2 \cdot \cp(G')$. The cycles of $G$ are in bijection with the cycles of $G'$, by exchanging the edges $uv$ and $vw$ and an edge $uw$ when appropriate. Hence $\cp(G) = \cp(G')$. Moreover, if $S$ is a feedback vertex set of $G$ that does not contain $v$, then $S$ is a feedback vertex set of $G'$, and if $S$ is a feedback vertex set of $G$ that contains $v$, then $(S \setminus \{v\}) \cup \{u\}$ is a feedback vertex set of $G'$. Thus $\fvs(G) \le \fvs(G') \leq 2 \cdot \cp(G') = \cp(G)$, and $G$ satisfies Jones' Conjecture, a contradiction. Hence $G$ is cubic.
\cqed\end{proof}

\begin{claim}\label{2co}
The multigraph $G$ is $2$-connected.
\end{claim}

\begin{proof}
Suppose that $G$ is not $2$-connected. As $G$ is cubic, that means that $G$ is not $2$-edge-connected. Let $e$ be a separating edge of $G$. Both components $G_1$ and $G_2$ of $G - e$ verify Jones' Conjecture by minimality of $G$. Since $e$ is separating, it is not in any cycle of $G$. The union of any feedback vertex set of $G_1$ and any feedback vertex set of $G_2$ is a feedback vertex set of $G$, so $\fvs(G) \le \fvs(G_1) + \fvs(G_2)$. The union of any cycle packing of $G_1$ and any cycle packing of $G_2$ is a cycle packing of $G$, so $\cp(G) \ge \cp(G_1) + \cp(G_2)$. Therefore $G$ satisfies Jones' Conjecture, a contradiction. 
\cqed\end{proof}
In particular since $G$ is cubic, Claim~\ref{2co} implies that $G$ is a simple graph.

\begin{claim}\label{3co}
The graph $G$ is $3$-connected.
\end{claim}

\begin{proof} Assume that it is not $3$-connected, and thus not $3$-edge-connected. Let $u_1u_2$ and $v_1v_2$ be a $2$-edge-cut, where $u_1$ and $v_1$ are in the same connected component of \mbox{$G-\{u_1u_2,v_1v_2\}$}, which we denote $G_1$. Let $G_2$ be the other connected component of \mbox{$G-\{u_1u_2,v_1v_2\}$}. We write $G'_1=G_1+ \{u_1v_1\}$ and $G'_2=G_2+\{u_2v_2\}$. Note that this may lead to a double edge. See Figure~\ref{fig3co} for an illustration. By minimality of $G$, we know that $G_1$, $G_2$, $G_1'$ and $G_2'$ all satisfy Jones' Conjecture.

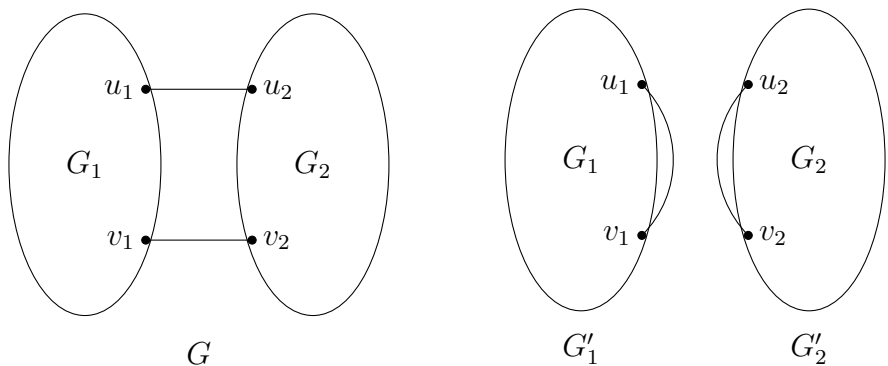
\begin{figure}[h]
\begin{center}
\begin{tikzpicture}
    \draw (0,0) ellipse (1cm and 2cm);
    \draw (0,0) node {$G_1$};
    \coordinate (u1) at (0.8,1);
    \coordinate (v1) at (0.8,-1);

    \draw[left] (u1) node {$u_1$};
    \draw[left] (v1) node {$v_1$};

    \draw [fill=black](u1) circle (1.5pt) ;
    \draw [fill=black](v1) circle (1.5pt) ;
    
    \draw (3,0) ellipse (1cm and 2cm);
    \draw (3,0) node {$G_2$};
    \coordinate (u2) at (2.2,1);
    \coordinate (v2) at (2.2,-1);

    \draw[right] (u2) node {$u_2$};
    \draw[right] (v2) node {$v_2$};

    \draw [fill=black](u2) circle (1.5pt) ;
    \draw [fill=black](v2) circle (1.5pt) ;
    
    \draw (v1) -- (v2);
    \draw (u1) -- (u2);
    \draw (1.5,-2.5) node {$G$};
    \end{tikzpicture}~~~~~~~~~~
\begin{tikzpicture}
    \draw (0,0) ellipse (1cm and 2cm);
    \draw (0,0) node {$G_1$};
    \coordinate (u1) at (0.8,1);
    \coordinate (v1) at (0.8,-1);

    \draw[left] (u1) node {$u_1$};
    \draw[left] (v1) node {$v_1$};

    \draw [fill=black](u1) circle (1.5pt) ;
    \draw [fill=black](v1) circle (1.5pt) ;
    \draw (0,-2.5) node {$G_1'$};
    
    \draw (3,0) ellipse (1cm and 2cm);
    \draw (3,0) node {$G_2$};
    \coordinate (u2) at (2.2,1);
    \coordinate (v2) at (2.2,-1);

    \draw[right] (u2) node {$u_2$};
    \draw[right] (v2) node {$v_2$};

    \draw [fill=black](u2) circle (1.5pt) ;
    \draw [fill=black](v2) circle (1.5pt) ;
    
    \draw (v1) to [out=45, in=-45] (u1);
    \draw (v2) to [out=134, in=-135] (u2);
    \draw (3,-2.5) node {$G_2'$};
    \end{tikzpicture}
    
\caption{The graphs $G$, $G_1'$, and $G_2'$ in Claim~\ref{3co}. \label{fig3co}}
\end{center}
\end{figure}

Note that since $G'_1=G_1+ \{u_1v_1\}$, we have $\cp(G_1) \le \cp(G'_1) \le \cp(G_1)+1$. We first argue that $\cp(G'_1)=\cp(G_1)+1$. Assume for a contradiction that $\cp(G'_1)=\cp(G_1)$. Note that for any feedback vertex set $S_1$ of $G'_1$, either $u_1 \in S_1$ or $v_1 \in S_1$ or $u_1$ and $v_1$ are in distinct components of $G_1 - S_1$, so $\fvs(G)\leq \fvs(G'_1)+\fvs(G_2)$. Thus, $\fvs(G)\leq \fvs(G'_1)+\fvs(G_2)\leq 2 \cp(G'_1)+2 \cp(G_2)=2 \cp(G_1)+2 \cp(G_2)\leq 2 \cp(G)$, a~contradiction. 

By symmetry, we have $\cp(G'_2)=\cp(G_2)+1$. Therefore, every cycle packing of $G'_1$ contains the edge $u_1v_1$ and every cycle packing of $G'_2$ contains the edge $u_2v_2$. We can thus combine a cycle packing of $G'_1$ and a cycle packing of $G'_2$ by making a single cycle out of those two cycles. So $\cp(G)=\cp(G_1)+\cp(G_2)+1$. However, if $S_1$ is a feedback vertex set of $G_1$ and $S_2$ is a feedback vertex set of $G_2$, then $S_1 \cup S_2 \cup \{u_1\}$ is a feedback vertex set of $G$. Therefore $\fvs(G)\leq \fvs(G_1)+\fvs(G_2)+1 \leq 2\cp(G_1)+2\cp(G_2)+1 < 2 \cp(G)$, a~contradiction. Therefore $G$ is $3$-connected.
\cqed\end{proof}


\begin{claim}
The graph $G$ is essentially $4$-edge-connected.
\end{claim}

\begin{proof}
Assume that $G$ is not essentially $4$-edge-connected, and thus not cyclically $4$-edge-connected.
Consider a non-trivial $3$-edge-cut $\{e_A,e_B,e_C\}$. Let $G_1$ and $G_2$ be the two components of $G \setminus \{e_A,e_B,e_C\}$. For $i \in \{1,2\}$, we define $G_i^{ABC}$ as the graph obtained from $G$ by contracting $G_{3-i}$ into a single vertex $x$. See Figure~\ref{figend} for an illustration.
We define $G_i^{AB}$ (resp. $G_i^{AC}$, $G_i^{BC}$) as the graph obtained from $G_i$ by connecting with an edge vertices from $G_i$ incident to $e_A$ and $e_B$ (resp. to $e_A$ and $e_C$ for $G_i^{AC}$ or to $e_B$ and $e_C$ for $G_i^{BC}$). Again, this may lead to a double edge.
Note that for both values of $i$, all of $G_i$, $G_i^{AB}$, $G_i^{AC}$, $G_i^{BC}$ and $G_i^{ABC}$ have fewer vertices than $G$, and thus satisfy Jones' Conjecture.

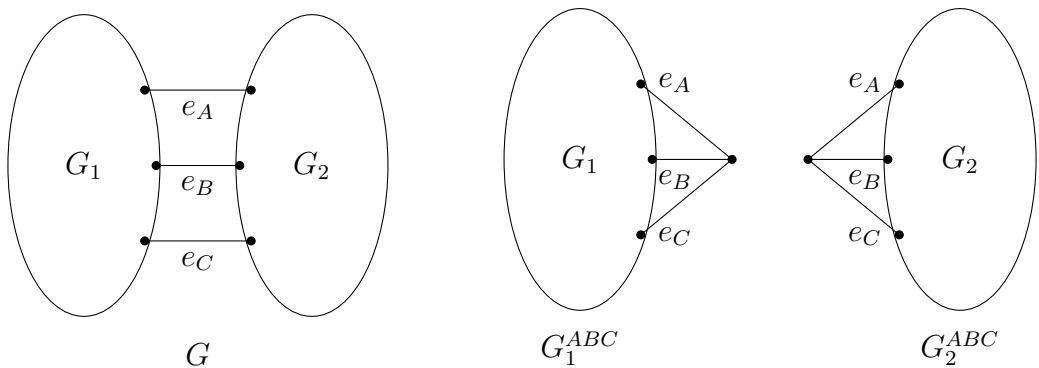
\begin{figure}[h]
\begin{center}
\begin{tikzpicture}
    \draw (0,0) ellipse (1cm and 2cm);
    \draw (0,0) node {$G_1$};
    \coordinate (u1) at (0.8,1);
    \coordinate (w1) at (0.95,0);
    \coordinate (v1) at (0.8,-1);

    \draw [fill=black](u1) circle (1.5pt) ;
    \draw [fill=black](v1) circle (1.5pt) ;
    \draw [fill=black](w1) circle (1.5pt) ;
    
    \draw (3,0) ellipse (1cm and 2cm);
    \draw (3,0) node {$G_2$};
    \coordinate (u2) at (2.2,1);
    \coordinate (w2) at (2.05,0);
    \coordinate (v2) at (2.2,-1);
      
    \draw [fill=black](u2) circle (1.5pt) ;
    \draw [fill=black](v2) circle (1.5pt) ;
    \draw [fill=black](w2) circle (1.5pt) ;
    
    \draw (v1) -- (v2);
    \draw (u1) -- (u2);
    \draw (w1) -- (w2);
    \draw (1.5,-2.5) node {$G$};
    \draw[below] (1.5,1) node {$e_A$};
    \draw[below] (1.5,0) node {$e_B$};
    \draw[below] (1.5,-1) node {$e_C$};
    \end{tikzpicture}~~~~~~~~~~
\begin{tikzpicture}
    \draw (0,0) ellipse (1cm and 2cm);
    \draw (0,0) node {$G_1$};
    \coordinate (u1) at (0.8,1);
    \coordinate (w1) at (0.95,0);
    \coordinate (v1) at (0.8,-1);
    \coordinate (x1) at (2,0);
    
    \draw (v1) -- (x1);
    \draw (u1) -- (x1);
    \draw (w1) -- (x1);

    \draw [fill=black](u1) circle (1.5pt) ;
    \draw [fill=black](v1) circle (1.5pt) ;
    \draw [fill=black](w1) circle (1.5pt) ;
    \draw [fill=black](x1) circle (1.5pt) ;
    \draw (0,-2.5) node {$G_1^{ABC}$};
    \draw[above] (1.25,0.75) node {$e_A$};
    \draw[below] (1.25,0) node {$e_B$};
    \draw[below] (1.25,-0.75) node {$e_C$};
    
    \draw (5,0) ellipse (1cm and 2cm);
    \draw (5,0) node {$G_2$};
    \coordinate (u2) at (4.2,1);
    \coordinate (w2) at (4.05,0);
    \coordinate (v2) at (4.2,-1);
    \coordinate (x2) at (3,0);
    
    \draw (v2) -- (x2);
    \draw (u2) -- (x2);
    \draw (w2) -- (x2);
      
    \draw [fill=black](u2) circle (1.5pt) ;
    \draw [fill=black](v2) circle (1.5pt) ;
    \draw [fill=black](w2) circle (1.5pt) ;
    \draw [fill=black](x2) circle (1.5pt) ;
    \draw[above] (3.75,0.75) node {$e_A$};
    \draw[below] (3.75,0) node {$e_B$};
    \draw[below] (3.75,-0.75) node {$e_C$};
    
    \draw (5,-2.5) node {$G_2^{ABC}$};
    \end{tikzpicture}
    
\caption{The graphs $G$, $G_1^{ABC}$, and $G_2^{ABC}$. \label{figend}}
\end{center}
\end{figure}

First note that for both values of $i$, $\fvs(G)\leq \fvs(G_i^{ABC})+\fvs(G_{3-i})$. In order to prove that let us assume without loss of generality that $i=1$. Then remove $FVS(G_2)$ from $G$. What remains from $G_2$ after deleting $FVS(G_2)$ is a forest that could hypothetically create connections between vertices from $G_1$ incident to $e_A, e_B, e_C$.
However if we are given any tree $T$ and its three vertices $u, v, w \in V(T)$ then $P_{uv} \cap P_{vw} \cap P_{wu} \neq \emptyset$ (in fact it is always a single vertex), where $P_{uv}, P_{vw}, P_{wu}$ are sets of vertices on unique paths between corresponding vertices, so it is possible to break the connections between all three pairs of these vertices by removing a single vertex of $T$.
Because of that we see that $\fvs(G - FVS(G_2)) \le \fvs(G_1^{ABC})$ what leads to $\fvs(G) \leq \fvs(G_1^{ABC})+\fvs(G_{2})$.
Therefore we see that $\fvs(G)\leq \fvs(G_i^{ABC})+\fvs(G_{3-i}) \leq \fvs(G_1)+\fvs(G_2)+1 \le 2\cp(G_1)+2\cp(G_2)+1$. We also have $\cp(G)\geq \cp(G_1)+\cp(G_2)$, yet $\fvs(G)> 2 \cp(G)$. 

It follows that for both values of $i$:

\begin{equation}\label{eq:fvsABC}
    \fvs(G_i^{ABC})=\fvs(G_i)+1
\end{equation}
\begin{equation}\label{eq:fvscp}
    \fvs(G_i)=2 \cp(G_i)
\end{equation}
And that:
\begin{equation}\label{eq:fvsG}
    \fvs(G)=\fvs(G_1)+\fvs(G_2)+1
\end{equation}
\begin{equation}\label{eq:cpG}
    \cp(G)=\cp(G_1)+\cp(G_2)
\end{equation}

We are now ready for a closer analysis.
\begin{enumerate}[(i)]
    \item\label{notthree0} For any $i$ and for any $x \in \{A,B,C\}$, we have $$\fvs(G) \leq \fvs(G_i^{ABC-x})+\max_{y \neq x} \fvs(G_{3-i}^{ABC-y}).$$ Indeed, take without loss of generality $i=1$ and $x=C$. Let us consider a minimum feedback vertex set $S$ of $G_1^{AB}$.
    Note that in $G_1\setminus S$, there is no path between the vertex incident to $e_A$ and the vertex incident to $e_B$ or at least one of them is in $S$. As a consequence, either vertex incident to $e_C$ is in $S$ or one of them, say the vertex incident to $e_A$, is either in $S$ or is not in the same component as the vertex incident to $e_C$. For any minimum feedback vertex set $S'$ of $G_2^{BC}$, we observe that $S\cup S'$ is a feedback vertex set of $G$, hence the conclusion. In particular, by combining with (\ref{eq:fvsG}), if $\fvs(G_i^{ABC-x})=\fvs(G_i)$ then $\fvs(G_{3-i}^{ABC-y})=\fvs(G_{3-i})+1$ for some $y \neq x$.
\item\label{fvscp} For every $i \in \{1,2\}$ and for every $x \in \{A,B,C\}$, if $\fvs(G_i^{ABC-x})=\fvs(G_i)+1$, then $\cp(G_i^{ABC-x})=\cp(G_i)+1$. That follows from (\ref{eq:fvscp}), since $G_i^{ABC-x}$ satisfies Jones' Conjecture.
\item\label{notwocp1} For every $x \in \{A,B,C\}$, we have either  $\cp(G_1^{ABC-x})=\cp(G_1)$ or $\cp(G_2^{ABC-x})=\cp(G_2)$. Indeed, suppose not. Then both $\cp(G_1^{ABC-x})=\cp(G_1)+1$ and $\cp(G_2^{ABC-x})=\cp(G_2)+1$, for say $x = C$. Then for $i \in \{1,2\}$, in every cycle packing of $G_i^{AB}$ there is a cycle containing $e_A$ and $e_B$. By taking a cycle packing of $G_1^{AB}$ and a cycle packing of $G_2^{AB}$, we obtain a cycle packing of $G$ (combining two cycles into one). So
 $\cp(G) \ge \cp(G_1)+\cp(G_2)+1$, a contradiction with (\ref{eq:cpG}).
\end{enumerate}

It follows from~(\ref{eq:fvsABC}) and~(\ref{eq:fvscp}) that $\cp(G_i^{ABC})=\cp(G_i)+1$ for both values of $i$. Note that a maximum cycle packing of $G_i^{ABC}$ uses two edges out of $\{e_A,e_B,e_C\}$. It follows that for some $z_i \in \{A,B,C\}$, we have $\cp(G_i^{ABC-z_i})=\cp(G_i)+1$.

We assume without loss of generality that $z_1=C$. Note that from~(\ref{notwocp1}), $\cp(G_2^{AB})=\cp(G_2)$, hence $z_2\neq C$, and $\fvs(G_2^{AB})=\fvs(G_2)$ by~(\ref{fvscp}). We assume without loss of generality $z_2=A$. By symmetry, $\cp(G_1^{BC})=\cp(G_1)$ and $\fvs(G_1^{BC})=\fvs(G_1)$. From~(\ref{notthree0}) applied with $i=1$ and $x=A$, there is $y \in \{B,C\}$ such that $\fvs(G_2^{ABC-y})=\fvs(G_2)+1$. Note that $y \neq C$, so $y=B$ and $\fvs(G_2^{AC})=\fvs(G_2)+1$. We derive from~(\ref{fvscp}) that $\cp(G_2^{AC})=\cp(G_2)+1$, hence $\cp(G_1^{AC})=\cp(G_1)$ by~(\ref{notwocp1}). Again from~(\ref{fvscp}), we obtain $\fvs(G_1^{AC})=\fvs(G_1)$. 

Therefore we have: $\fvs(G_2^{AB})=\fvs(G_2)$, $\fvs(G_1^{BC})=\fvs(G_1)$, and $\fvs(G_1^{AC})=\fvs(G_1)$. 
Now, (\ref{notthree0}) applied with $i=2$ and $x=C$ yields a contradiction.
\cqed\end{proof}
\end{proof}




\section{Conclusion}

Through a non-trivial combination of elementary tricks and using a nice preliminary result of~\cite{munaro2016quelques}, we were able to close the case of Jones' Conjecture for subcubic graphs. 

The obvious question is whether this can be at all used to solve the whole conjecture. The reduction we have for subcubic graphs extends easily to the general setting, in the sense that a smallest counter-example to Jones' Conjecture is essentially $4$-edge-connected. It is not difficult to argue in a similar way that such a graph is $3$-vertex-connected. However, a much harder question is whether it is essentially $4$-vertex-connected. While it still seems possible, such a result using our approach would require additional tricks. Note that being in the general setting also gives us more leeway regarding possible reductions (no need to shy away from increasing the maximum degree, as long as there are fewer vertices). 

A second obstacle to generalization is that even assuming that a smallest counter-example is essentially $4$-vertex-connected, Theorem~\ref{munaro} only deals with the subcubic case. Another argument must then be devised.

A different approach would be not to aim for the conjectured bound of $2$ but simply for any bound better than the existing one of $3$. Unfortunately, this does not seem conceptually much easier. Let us emphasize this: a simple discharging argument yields $\fvs(G) \leq 3 \cp(G)$ for every planar graph $G$, while even significant effort fails to grant a factor of $(3-\epsilon)$ instead of $3$. 

To highlight how little we understand around Jones' Conjecture, we conclude by posing the following stronger conjecture. Note that the example of many nested disjoint cycles shows that the embedding cannot be fixed. Also note that the simple discharging argument mentioned above does not imply the following conjecture with a factor of $3$ instead of $2$.

\begin{Conjecture}\label{conj:fp}
For every planar graph $G$, we have $$\fvs(G)\leq 2 \cdot \fp(G),$$ where $\fp(G)$ is the maximum size of a face-packing of $G$, 
i.e., a cycle-packing where, for some embedding of $G$, every cycle bounds a face.
\end{Conjecture}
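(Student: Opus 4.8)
The plan is to attack \cref{conj:fp} by the same minimal-counterexample strategy that powers \cref{th:jonessubcubic}, first specializing to the subcubic case where \cref{munaro} and the face structure are most transparent. The starting point is the trivial inequality $\fp(G)\le\cp(G)$, valid for every planar $G$ since a face-packing is by definition a cycle-packing; thus \cref{conj:fp} strengthens \cref{conj:jones}, and any proof must genuinely use the extra \emph{facial} constraint rather than merely re-running the cycle-packing argument. I would let $G$ be a smallest subcubic planar counterexample to $\fvs(G)\le 2\,\fp(G)$ and try to reprove the reductions of \cref{lem:3connected} with $\cp$ replaced by $\fp$ throughout, so as to force $G$ to be a simple, $3$-connected, essentially $4$-edge-connected cubic graph.

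The reductions must be re-audited, because $\fp$ is embedding-dependent whereas $\cp$ is not. Suppressing a vertex of degree at most two preserves the embedded face structure, so it leaves $\fp$ unchanged exactly as it leaves $\cp$ unchanged. For a bridge or a $2$-edge-cut one combines embeddings of the two sides: placing the two pieces in a common face lets a facial cycle of one side and a facial cycle of the other fuse into a single facial cycle of $G$, which is precisely the embedded analogue of the ``merge two cycles into one'' step that produced the crucial $+1$ in \cref{3co}. I would formalize this as a \emph{face-sum} across a small edge-cut and verify that it yields $\fp(G)=\fp(G_1)+\fp(G_2)+1$ in the $2$-cut case, and the corresponding identities in the $3$-edge-cut case, mirroring the equations in the proof of \cref{lem:3connected} but tracking a witnessing embedding at every step.

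For the highly connected core I would exploit Whitney's theorem: a $3$-connected planar graph has an essentially unique embedding, so $\fp(G)$ becomes simply the maximum number of pairwise vertex-disjoint faces. In a cubic graph each vertex lies on exactly three faces and $\sum_{f}|f|=2|E|=3|V|$, with $|V|/2+1$ bounded faces by Euler's formula; assigning weight $1/3$ to every face gives a fractional face-packing of value about $|V|/6$, while $\fvs(G)$ is bounded by the cycle rank $|V|/2+1$. The hope is to convert this fractional packing into an integral one and then sharpen the constant from $3$ to $2$ using a Munaro-type structural dichotomy on the essentially $4$-edge-connected core, adapting \cref{munaro}; note, however, that \cref{munaro} is stated for a counterexample to the $\cp$-bound, so one needs an analogue that survives the replacement of $\cp$ by $\fp$.

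I expect the principal obstacle to be exactly the gap between the fractional and integral face-packings, which is also why the remark after \cref{conj:fp} warns that the discharging argument giving $\fvs\le 3\,\cp$ does not even yield $\fvs\le 3\,\fp$: disjoint faces can be forced to overlap heavily, so the clean $|V|/6$ fractional bound need not be attainable integrally. A second, related difficulty is that the freedom to choose the embedding is genuinely needed (the nested-cycles example shows no fixed embedding works), yet the face-counting engine wants the fixed Whitney embedding of the $3$-connected core; reconciling these is delicate precisely at the $3$-edge-cut reductions, where a cycle combined across the cut need not bound a face in any single global embedding. Controlling $\fp$ under these $3$-edge-cut contractions while simultaneously committing to one embedding of $G$ is where I would expect the argument to stall, and it is the step I would attack first.
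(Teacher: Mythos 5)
There is no proof to compare against here: \cref{conj:fp} is posed in the paper as an \emph{open} conjecture --- the authors explicitly offer it to highlight how little is understood, and they prove nothing about it. Your proposal must therefore stand entirely on its own, and as written it is a research program rather than a proof: every step that goes beyond \cref{th:jonessubcubic} is left unexecuted, and you candidly flag the two places where it stalls. Those flags are accurate, and they are fatal. First, the entire engine of the subcubic proof is \cref{munaro}, a statement about a minimum counterexample to the $\cp$-version of the conjecture whose proof machinery (in~\cite{munaro2016quelques}) manipulates cycle packings with no facial constraint; you have no analogue for $\fp$, and since $\fp(G)\le\cp(G)$ your conjectured inequality is strictly stronger, so nothing in the existing argument can simply be inherited. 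Second, your reduction audit already breaks at the $3$-edge-cut step: in the proof of \cref{lem:3connected}, identities such as (\ref{eq:cpG}) and observation~(\ref{notwocp1}) rest on freely merging a cycle of $G_1^{ABC-x}$ with a cycle of $G_2^{ABC-x}$ through two cut edges, while the third cut edge is routed arbitrarily. For face-packings the merged cycle must bound a face in a \emph{single} embedding of $G$ that simultaneously certifies all the other cycles of both packings, and the third cut edge generically destroys facialness of the merged cycle; you name this obstacle but offer no mechanism to overcome it, so the analogues of equations (\ref{eq:fvsABC})--(\ref{eq:cpG}) are unproven.

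Two further gaps deserve explicit mention. The fractional counting in your third paragraph gives, in a cubic plane triangulation-free setting, a fractional face-packing of value roughly $|V|/6$ against $\fvs(G)\le |V|/2+1$, i.e.\ at best a factor-$3$ bound \emph{if} the fractional packing could be rounded integrally --- but the paper warns, immediately before stating \cref{conj:fp}, that even the known discharging proof of $\fvs(G)\le 3\cp(G)$ does not yield the factor-$3$ statement for $\fp$; the integrality gap for disjoint faces is exactly the open difficulty, and you provide no rounding argument. Finally, even if every step succeeded, you would have proved the conjecture only for subcubic planar graphs, whereas \cref{conj:fp} is asserted for all planar graphs; unlike the $\cp$-setting, there is no known reduction from the general case to the subcubic one (the conclusion of the paper explains that already for \cref{conj:jones} the degree reduction is problematic). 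In short: the strategy is a sensible first attack and correctly identifies where the difficulty lies, but it contains no new idea at any of the points where new ideas are required, and so it does not constitute a proof, nor even a substantial partial result.
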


\bibliography{jones}

\begin{thebibliography}{CvBHJR92}

\bibitem[CFS12]{chen2012feedback}
Hong-Bin Chen, Hung-Lin Fu, and Chie-Huai Shih.
\newblock Feedback vertex set on planar graphs.
\newblock {\em Taiwanese Journal of Mathematics}, 16(6):2077--2082, 2012.

\bibitem[CGH14]{chappell2014cycle}
Glenn~G. Chappell, John Gimbel, and Chris Hartman.
\newblock On cycle packings and feedback vertex sets.
\newblock {\em Contributions to Discrete Mathematics}, 9(2), 2014.

\bibitem[CvBHJR92]{van2019tight}
Wouter Cames~van Batenburg, Tony Huynh, Gwena{\"e}l Joret, and Jean-Florent
  Raymond.
\newblock A tight {E}rdős-{P}ósa function for planar minors.
\newblock {\em Advances in Combinatorics}, 33pp, 2019:2.

\bibitem[EP65]{erdos_posa_1965}
Paul Erd\H{o}s and Lajos P\'osa.
\newblock On independent circuits contained in a graph.
\newblock {\em Canadian Journal of Mathematics}, 17:347--352, 1965.

\bibitem[KLL02]{kloks2002new}
Ton Kloks, Chuan-Min Lee, and Jiping Liu.
\newblock New algorithms for k-face cover, k-feedback vertex set, and
  k-disjoint cycles on plane and planar graphs.
\newblock In {\em International Workshop on Graph-Theoretic Concepts in
  Computer Science}, pages 282--295. Springer, 2002.

\bibitem[Mun16]{munaro2016quelques}
Andrea Munaro.
\newblock {\em Sur quelques invariants classiques et nouveaux des
  hypergraphes}.
\newblock PhD thesis, Grenoble Alpes, 2016.

\bibitem[MYZ13]{ma2013approximate}
Jie Ma, Xingxing Yu, and Wenan Zang.
\newblock Approximate min-max relations on plane graphs.
\newblock {\em Journal of Combinatorial Optimization}, 26(1):127--134, 2013.

\bibitem[RS86]{robertson1986graph}
Neil Robertson and Paul~D. Seymour.
\newblock Graph minors. {V}. excluding a planar graph.
\newblock {\em Journal of Combinatorial Theory, Series B}, 41(1):92--114, 1986.

\end{thebibliography}

\end{document}